    \theoremstyle{plain}
   \newtheorem{thm}{Theorem}
   \newtheorem{lem}[thm]{Lemma}
   \theoremstyle{definition}
   \theoremstyle{remark}
   \newtheorem{rem}[thm]{{\it Remark}}
   \def\tlabel{\label}
\DeclareMathOperator{\okr}{{\stackrel{{\scriptscriptstyle{\mathsf{def}}}}{=}}}
\DeclareMathOperator{\D}{d\!} 
   \DeclareMathOperator{\conv}{conv}
\def\funkk#1#2#3#4{#1\ni#2\mapsto#3\in#4}
\def\is#1#2{\langle#1,#2\rangle}
\def\Le{\leqslant}
\def\odw{^{-1}}
\def\res#1{|_{#1}}
\def\spek#1{{\rm sp}(#1)}
\def\zb#1#2{\{{#1}\colon\ {#2}\}}
\def\funk#1#2#3{#1\colon#2\to#3}
\def\aac{\mathcal A}
\def\bbc{\mathcal B}
\def\ccc{\mathcal C}
\def\hhc{\mathcal H}
\def\kkc{\mathcal K}
\def\mmc{\mathcal M}
\def\rrc{\mathcal R}
\def\xxc{\mathcal X}
\def\ccb{\mathbb C}
\def\aaf{\mathfrak A}
\def\wwf{\mathfrak W}
\def\bbs{\boldsymbol B}
\def\qqs{\boldsymbol Q}
\begin{document}

   %%%%%%%%%%%

   \title[Decomposing numerical ranges  and spectral sets   ]{Decomposing numerical ranges  along with spectral sets   }
   \author[F.H. Szafraniec]{Franciszek Hugon Szafraniec}
   \address{Instytut        Matematyki,         Uniwersytet
   Jagiello\'nski, ul. \L ojasiewicza 6, 30 348 Krak\'ow, Poland}
\email{franciszek.szafraniec@im.uj.edu.pl}
   \thanks{This work was partially supported  by the MNiSzW grant N201 026 32/1350. The
author also would like to acknowledge an assistance of the EU
Sixth Framework Programme for the Transfer of Knowledge ``Operator
theory methods for differential equations'' (TODEQ) \#
MTKD-CT-2005-030042.}
   \subjclass{Primary 47A12, 47A25;  Secondary 47A60,   46J10}
   \keywords{$K$-spectral set, numerical range, function algebra,
set of antisymmetry, Gleason part, F. and M. Riesz theorem,
elementary spectral measure, similarity, orthogonal decomposition,
contraction, $\ccc_\rho$ operators}
   %\dedicatory{}
   %%%%%%%%%%%%%%%%%%
   \begin{abstract}  This note is to indicate the new sphere of applicability
of the method developed by Mlak as well as by the author.
Restoring those ideas is summoned by current developments
concerning $K$-spectral sets on numerical ranges.
   \end{abstract}
   \maketitle

   The decomposition of numerical ranges the title refers to is,
see \cite[p. 42]{gust},
   \begin{equation} \label{1.23.10}
   \wwf(A\oplus B)=\conv(\wwf(A)\cup\wwf(B));
   \end{equation}
   it can be proved for any two Hilbert space operators $A$ and
$B$. The other decomposition is that of the spectrum of a function
algebra related to a Hilbert space operator. These are the two
leading topics of the current paper.

   \section*{The algebra $\rrc(X)$}
   Here $X$ stands always for a compact subset of $\ccb$ and
$\ccc(X)$ does for the algebra of all continuous functions on $X$,
with the supremum norm. Denote by $\rrc(X)$ the closure in
$\ccc(X)$ of the algebra of all rational functions with poles off
$X$. The {\em spectrum}\,\footnote{\;Pretty often it is know under
the name of the maximal ideal space of the algebra.} $\xxc$ of
$\rrc(X)$, that is the set of all the characters (=nontrivial
multiplicative functionals) of $\rrc(X)$, can be identified with
$X$ itself via the evaluation functionals (which are apparently
characters)
   \begin{equation} \label{1.27.10}
   \funkk X x {\chi_x}{\xxc_\aac},\quad \chi_x(u)\okr
u(x),\;u\in\rrc(X);
   \end{equation}
this mapping is in fact a homeomorphism with the topologies this
of $\ccb$ and that of the $*$--weak topology of $\rrc(X)'$, the
topological dual (see \cite[Proposition 6.28]{ion} for a direct
argument\,\footnote{\;From the monographs on function algebras, we
recommend \cite{gam} as the most accessible one, also for missing
here terminology; the source \cite{ion} has also to be
mentioned.}).

   A positive measure $\nu_x$ on $X$ is called a representing
measure of $x$ (which is the same, according to the above
identification, as the corresponding characters $\chi_x$)
   \begin{equation*}
   \chi_x(u)=u(x)=\int_X u \D \nu_x, \quad u\in\rrc(X).
   \end{equation*}
   The point mass at $x$ is apparently a representing measure of
$x$; if it is the only representing measure of $x$ then $x$ is
called a {\em Choquet point} of $\rrc(X)$. All the Choquet points
form the {\em Choquet boundary} $\varSigma$ of $\rrc(X)$ and its
closure is just the {\em Shilov boundary} $\varGamma$ of
$\rrc(X)$.

    \subsection*{Sets of antisymmetry}
   Call subset $A$ of $X$ a {\em set of antisymmetry} of a
   function algebra $\aac$ if, for $u$ in $\aac$, $u$ real valued
   on $A$ implies $u$ is constant on $A$. Let $u\res A$ be the
   restriction of $u$ to $A$, $\aac\res A \okr\zb{u\res A}{u\in
   \aac}$. Then we can restate Bishop's theorem \cite{bish} as
   \begin{thm}\label{1.24.10}
   Every set of antisymmetry of $\aac$ is contained in a maximal
   set of antisymmetry. The collection $\aaf$ of maximal sets of
   antisymmetry forms a pairwise disjoint, closed covering of $X$
   satisfying
   \begin{enumerate}
   \item[(a)] $u\in\ccc(X)$ and $u\res A\in \aac\res A$ for every
   $A\in\aaf$ imply $u\in\aac$; \item[(b)] $\aac\res A$ is closed
   in $\ccc(A)$, $A\in\aaf$.
   \end{enumerate}
   \end{thm}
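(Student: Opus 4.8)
The plan is to establish the three assertions separately, with the substance concentrated in part (a).

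\emph{Existence, disjointness, covering.} If $\{A_i\}$ is a chain of sets of antisymmetry of $\aac$ and $u\in\aac$ is real valued on $\bigcup_iA_i$, then $u$ is real valued, hence constant, on each $A_i$; the $A_i$ being nested, these constants agree, so $u$ is constant on the union, which is therefore again a set of antisymmetry. Zorn's lemma now produces maximal sets of antisymmetry. The closure of a set of antisymmetry is again one (a function constant on $A$ is, by continuity, constant on $\overline A$), so each maximal set of antisymmetry is closed. If $A$ and $B$ are sets of antisymmetry with $A\cap B\ne\emptyset$, then $A\cup B$ is one too, since the constants forced on $A$ and on $B$ must agree on the overlap; hence distinct maximal sets of antisymmetry are disjoint. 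Finally every one-point subset is trivially a set of antisymmetry, so the maximal ones cover $X$. This proves the opening sentence and the ``pairwise disjoint, closed covering'' clause.

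\emph{Proof of (a).} Work in $M(X)=\ccc(X)'$ and let $K=\{\mu\in\aac^\perp:\|\mu\|\le1\}$, where $\aac^\perp$ is the ($*$-weakly closed) annihilator of $\aac$; by Alaoglu, $K$ is $*$-weakly compact, and it is convex, so $K$ is the $*$-weakly closed convex hull of the set $\operatorname{ext}K$ of its extreme points (Krein--Milman). The crux is that \emph{$\SP\mu$ is a set of antisymmetry for every $\mu\in\operatorname{ext}K$.} We may assume $\mu\ne0$, and then $\|\mu\|=1$ (else $\mu=\|\mu\|\cdot(\mu/\|\mu\|)+(1-\|\mu\|)\cdot0$ is a nontrivial convex combination in $K$, contradicting extremality). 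Let $h\in\aac$ be real valued; since $\aac$ contains the constants we may, after an affine normalization, assume $0\le h\le1$ on $X$. As $\aac$ is an algebra, $vh\in\aac$ for each $v\in\aac$, so $\int v\D(h\mu)=\int vh\D\mu=0$; thus $h\mu\in\aac^\perp$, and likewise $(1-h)\mu\in\aac^\perp$, while (using $0\le h\le1$) $\|h\mu\|+\|(1-h)\mu\|=\int h\D|\mu|+\int(1-h)\D|\mu|=\|\mu\|=1$. If $\|h\mu\|=0$ then $h\equiv0$ on $\SP\mu$; if $\|(1-h)\mu\|=0$ then $h\equiv1$ on $\SP\mu$; otherwise $\mu$ is a nontrivial convex combination of $h\mu/\|h\mu\|$ and $(1-h)\mu/\|(1-h)\mu\|$, both in $K$, so extremality forces $h\mu=\|h\mu\|\,\mu$, whence $h$ equals the constant $\|h\mu\|$ at $|\mu|$-a.e.\ point and hence, by continuity and minimality of the support, on all of $\SP\mu$. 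In every case $h$ is constant on $\SP\mu$, so $\SP\mu$ is a set of antisymmetry and, by the part already proved, $\SP\mu\subseteq A$ for some $A\in\aaf$. Now take $u\in\ccc(X)$ with $u\res A\in\aac\res A$ for every $A\in\aaf$. By the Hahn--Banach theorem (and uniform closedness of $\aac$) it suffices to show $\int u\D\mu=0$ for all $\mu\in\aac^\perp$, equivalently for all $\mu\in K$. The set $\{\mu\in K:\int u\D\mu=0\}$ is convex and $*$-weakly closed, and it contains every extreme point of $K$: given $\mu\in\operatorname{ext}K$, pick $A\in\aaf$ with $\SP\mu\subseteq A$ and $v\in\aac$ with $v\res A=u\res A$; then $\int u\D\mu=\int v\D\mu=0$. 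By Krein--Milman this set is all of $K$, so $u\in\aac$.

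\emph{Proof of (b); the main obstacle.} With $R\colon\aac\to\ccc(A)$ the restriction map, the closed-range theorem says $\aac\res A$ is closed in $\ccc(A)$ iff the adjoint $R'$ has closed range; identifying $\ccc(A)'=M(A)$ and $\aac'=M(X)/\aac^\perp$, $R'$ is the quotient map on $M(X)$ restricted to $M(A)$, so the condition becomes: $M(A)+\aac^\perp$ is norm closed in $M(X)$. This is automatic once one knows that $\mathbf 1_A\mu\in\aac^\perp$ for every $\mu\in\aac^\perp$ — for then $M(A)+\aac^\perp=\{\lambda\in M(X):\mathbf 1_{X\setminus A}\lambda\in\aac^\perp\}$, the preimage of the closed set $\aac^\perp$ under the norm-continuous map $\lambda\mapsto\mathbf 1_{X\setminus A}\lambda$. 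For $\mu\in\operatorname{ext}K$ the required fact is immediate from the crux: $\SP\mu$ meets at most one member of $\aaf$, so $\mathbf 1_A\mu$ equals $\mu$ or $0$. The obstacle is the passage from extreme measures to an arbitrary $\mu\in\aac^\perp$: the map $\lambda\mapsto\mathbf 1_A\lambda$ is \emph{not} $*$-weakly continuous, so Krein--Milman does not transfer the property, and one must instead show that every annihilating measure splits along $\aaf$ — i.e.\ $\mathbf 1_A\mu\in\aac^\perp$ for each $A\in\aaf$ — which is a disintegration-type statement (exhausting $\mu$ by measures carried by finitely many members of $\aaf$). I expect that disintegration, the substance of Bishop's argument, to be the technically hardest step.
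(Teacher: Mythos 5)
First, a remark on the comparison itself: the paper offers no proof of this theorem --- it is presented as a restatement of Bishop's theorem with the reference \cite{bish}, and the measure-theoretic supplement your part (b) needs is exactly the separately quoted Theorem \ref{2.24.10} from \cite{gli} --- so your attempt can only be judged against the classical arguments. Against that standard, the first sentence, the ``pairwise disjoint, closed covering'' clause, and part (a) are correct and complete. Your proof of (a) is the standard de Branges--Glicksberg argument: $K=\zb{\mu\in\aac^\perp}{\|\mu\|\Le1}$ is convex and weak-$*$ compact, the support of each extreme point is a set of antisymmetry (the identity $\|h\mu\|+\|(1-h)\mu\|=\|\mu\|$ for $0\Le h\Le1$ and the three-case analysis are exactly right), and Krein--Milman together with the weak-$*$ continuity of $\mu\mapsto\int u\D\mu$ finishes it. The points you gloss over (constant $h$, the case $\mu=0$) are harmless.

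Part (b), however, is not proved, and the gap you flag is genuine rather than a routine verification. Your reduction --- $\aac\res A$ is closed in $\ccc(A)$ once $\mmc(A)+\aac^\perp$ is norm closed, which in turn follows from ``$\mu\res A\in\aac^\perp$ for every $\mu\in\aac^\perp$'' --- is sound, but that last statement is precisely the nontrivial direction of Theorem \ref{2.24.10}, a result of the same depth as the theorem you are proving, and it cannot be extracted from the extreme-point lemma by any soft limiting argument, for the reason you yourself give: $\mu\mapsto\mu\res A$ is not weak-$*$ continuous, so knowing the splitting on $\operatorname{ext}K$ says nothing about a general annihilating measure. The classical way to close the gap is not a disintegration of an individual $\mu$ but a peak-set argument: one shows that each $A\in\aaf$ is an intersection of peak sets (for $X\subset\ccb$, hence metrizable, even a single peak set), and for a peak set $E$ with peaking function $p\in\aac$ the bounded pointwise convergence $p^n\to1$ on $E$, $p^n\to0$ off $E$ gives $0=\int vp^n\D\mu\to\int_Ev\D\mu$ for all $v\in\aac$, i.e.\ $\mu\res E\in\aac^\perp$. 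The genuinely hard step, and the substance of \cite{gli}, is that the maximal sets of antisymmetry are intersections of peak sets in the first place; as it stands your (b) rests on that unproved assertion.
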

   An addition information we are going to use here comes form
   \cite{gli} (see also \cite[Chapter II, Theorem 12.7]{gam} for
   more explicit exposition).
   \begin{thm}\label{2.24.10}
   Every maximal set of antisymmetry $A$ is an intersection of
   peak sets. $A$ is an intersection of peak set if and only if
   $\mu\perp \aac$ implies $\mu\res A\perp \aac$.
   \end{thm}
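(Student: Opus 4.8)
The plan is to establish the stated equivalence first, for an \emph{arbitrary} closed set $E\subseteq X$, and then to read off the claim about maximal sets of antisymmetry from it together with Theorem~\ref{1.24.10}.

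\emph{Peak sets $\Rightarrow$ the measure condition.} For a single peak set $P=\zb x{f(x)=1}$, with $f\in\aac$, $\|f\|\Le1$ and $|f|<1$ off $P$, the powers $f^{n}$ converge to $\chi_{P}$ pointwise and with modulus $\Le1$; dominated convergence applied to $0=\int_{X}f^{n}g\,\D\mu$ (valid for $g\in\aac$, $\mu\perp\aac$) then gives $\int_{P}g\,\D\mu=0$, that is $\mu\res P\perp\aac$. A finite intersection of peak sets is again a peak set (average the peaking functions), so the same conclusion holds for it. For an arbitrary intersection $E=\bigcap_{\alpha}P_{\alpha}$ I would fix $g\in\aac$ and $\varepsilon>0$, pick by regularity an open $U\supseteq E$ with $|\mu|(U\setminus E)<\varepsilon$, use compactness of $X$ (the compact sets $P_{\alpha}\setminus U$ have empty intersection, since $\bigcap_{\alpha}P_{\alpha}=E\subseteq U$) to replace the family by a finite subfamily whose intersection $P$ still lies in $U$, and combine $\int_{P}g\,\D\mu=0$ with $\|\mu\res P-\mu\res E\|<\varepsilon$; letting $\varepsilon\to0$ yields $\mu\res E\perp\aac$.

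\emph{The measure condition $\Rightarrow$ peak sets --- the core.} Assume now that $\mu\perp\aac$ forces $\mu\res E\perp\aac$; as $\aac^{\perp}$ is norm closed this also forces $\mu\res{X\setminus E}\perp\aac$. The first step is to convert this into closedness: if $\sigma$ is a norm limit of measures $\nu_{n}+\rho_{n}$ with $\nu_{n}$ carried by $E$ and $\rho_{n}\in\aac^{\perp}$, then $\sigma\res{X\setminus E}=\lim_{n}\rho_{n}\res{X\setminus E}\in\aac^{\perp}$, so $\sigma$ and $\sigma\res E$ differ by an element of $\aac^{\perp}$; by the closed range theorem the restriction map $\funk{R}{\aac}{\ccc(E)}$ has closed range. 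Consequently $\aac\res E$ is uniformly closed in $\ccc(E)$ and $\aac+\zb{v\in\ccc(X)}{v\res E=0}$, which coincides with $\zb{u\in\ccc(X)}{u\res E\in\aac\res E}$, is closed in $\ccc(X)$. The decisive second step is to produce peaking functions: for every $x_{0}\notin E$ I want $f\in\aac$ with $\operatorname{Re}f\Le1$ on $X$, $f\res E\equiv1$ and $\operatorname{Re}f(x_{0})<1$ --- equivalently $h:=1-f\in\aac$ with $h\res E=0$, $\operatorname{Re}h\Ge0$ on $X$ and $\operatorname{Re}h(x_{0})>0$. I would obtain such an $h$ by a Hahn--Banach separation in $\ccc(X)$ against the annihilating measures carried by $E\cup\{x_{0}\}$, the closedness just obtained being what makes the separation effective. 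Once $f$ is at hand, $\zb x{f(x)=1}=\zb x{\operatorname{Re}f(x)=1}$ is a genuine peak set --- it is peaked by $e^{t(f-1)}\in\aac$, whose modulus is $<1$ precisely off that set --- it contains $E$ and omits $x_{0}$; intersecting over all $x_{0}\notin E$ displays $E$ as an intersection of peak sets. (Because $X\subseteq\ccb$ is metrizable, a countable cofinal subfamily of these peaking functions may even be amalgamated into a single one, $\sum_{k}2^{-k}f_{k}$, so for such $X$ an intersection of peak sets is already a peak set.) I expect this second step --- passing from the soft closed-range and separation data to a function attaining the value $1$ \emph{exactly} on $E$ with the right real-part control --- to be the genuine obstacle; it is the substance of Glicksberg's theorem.

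\emph{Maximal sets of antisymmetry.} Finally, the measure-theoretic complement of Bishop's Theorem~\ref{1.24.10} --- which is in fact the heart of its usual proofs --- asserts that $\mu\res A\perp\aac$ whenever $\mu\perp\aac$ and $A$ is a maximal set of antisymmetry. Granting this, the equivalence just established shows at once that every $A\in\aaf$ is an intersection of peak sets.
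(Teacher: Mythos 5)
The paper does not prove Theorem \ref{2.24.10} at all: it is quoted as a known result, with the reader sent to Glicksberg \cite{gli} and to \cite[Chapter II, Theorem 12.7]{gam}. So your attempt has to be measured against the classical proof, not against anything in the text. Your easy direction (intersection of peak sets $\Rightarrow$ $\mu\res E\perp\aac$) is correct and is exactly the standard argument: $f^{n}\to\chi_{P}$ boundedly, finite intersections by averaging the peaking functions, arbitrary intersections by regularity plus compactness. The closed-range observation opening your converse is also sound.

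The proposal nevertheless has two genuine gaps, both of which you flag yourself, and they are precisely where the content of the theorem lives. First, the converse direction rests on producing, for each $x_{0}\notin E$, an $f\in\aac$ with $f\res E\equiv1$, $\operatorname{Re}f\Le1$ on $X$ and $\operatorname{Re}f(x_{0})<1$; saying you ``would obtain'' it by a Hahn--Banach separation, without specifying the convex sets being separated or how separation yields the \emph{exact} interpolation $f\res E\equiv1$ simultaneously with the global bound, is a restatement of the difficulty, not an argument --- this construction is the substance of Glicksberg's theorem. Even granting such an $f$, your identification $\zb x{f(x)=1}=\zb x{\operatorname{Re}f(x)=1}$ does not follow from $\operatorname{Re}f\Le1$ (it would from $\|f\|\Le1$); as written, $e^{t(f-1)}$ has modulus $1$ on the possibly larger set $\zb x{\operatorname{Re}f(x)=1}$ without equalling $1$ there, so it is not a peaking function for $\zb x{f(x)=1}$. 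This is repairable (pass to $\tfrac12\bigl(1+e^{t(f-1)}\bigr)$, or to $(1+h)^{-1}$ with $\operatorname{Re}h\Ge0$), but it must be repaired. Second, the first assertion of the theorem --- that a maximal set of antisymmetry satisfies $\mu\perp\aac\Rightarrow\mu\res A\perp\aac$ --- is simply ``granted''. That statement is Glicksberg's main theorem (proved via Krein--Milman on the unit ball of $\aac^{\perp}$: the closed support of an extreme point of that ball is a set of antisymmetry); it is not extractable from the purely topological statement of Theorem \ref{1.24.10} as quoted. With both ingredients assumed, the proposal establishes only the easy half of the stated equivalence.
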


    \subsection*{Gleason parts}  Due to the identification established by
\eqref{1.27.10} $X$ considered as the spectrum of $\rrc(X)$
decomposes disjointly into the sets, called the {\em Gleason
parts} of $\rrc(X)$, according to the relation\,\footnote{\;A full
description of this relation, in the general case of a function
algebra, can be found for instance in \cite[Theorem 3.12]{ion},
see \cite{bish} and \cite{bea} for the master results.}
$\|\chi_{x_1}-\chi_{x_2}\|<2$, $x_1,x_2\in X$ with the norm being
that of $\rrc(X)'$; this relation turns out to be an equivalence,
cf. \cite{glea}. The crucial point is that if $x_1$ and $x_2$ are
in the same Gleason part then there are measures $\nu_1$ and
$\nu_2$ on $\varGamma$ representing these points and such that
$c\nu_2\Le\nu_1$ and $c\nu_1\Le\nu_2$ with some $0<c<1$, see
\cite{bish1}. This allows us to think of complex measures
absolutely continuous with respect to a Gleason part $G$, in short
{\em $G$--continuous}.

   Denote by $\mu_G$ a $G$--continuous part of $\mu$ with respect
   to a Gleason part $G$. Let $(G_\alpha)_\alpha$ be the
   collection of all Gleason parts of $\rrc(X)$. It is known
   \cite{gli1} (see also \cite[ChapterVI, Theorem 3.4]{gam}) that

   \begin{thm} \tlabel{t1.26.11}
   Suppose $X\subset \ccb$ is compact. Then
    \begin{enumerate}
   \item[(i)] for every $\mu\in\mmc(X)$ there exists a unique
   $\mu_0\in\mmc(X)$ and a sequence $(\alpha_n)$ such that
   \begin{equation*}
   \mu=\sum\nolimits_n\mu_{G{\alpha_n}}+ \mu_0
   \end{equation*}
    the sum being norm convergent; \item[(ii)] $\mu_{G_\alpha}$
and $\mu_0$ are in $\aac^\perp$ provided so is $\mu$;

    %The projections $Q_\alpha$ and $Q_0$ defined by
%    $Q_\alpha\mu=\mu_{G_\alpha}$ and $Q_0\mu=\mu_0$ satisfy
%    \eqref{1.29.10} (as well as \eqref{2.29.10} trivially);
    \item[(iii)] $\mu_{G_\alpha}\!\perp\mu_{G_\beta}=0$ if
$\alpha\neq\beta$ and $\mu_{G_\alpha}\!\perp\mu_{G_0}=0$ for all
$\alpha$.
    %
%    $Q_\alpha Q_\beta=0$ if $\alpha\neq\beta$ and
%    $Q_0Q_\alpha=Q_\alpha Q_0=0$ for all $\alpha$.
   \end{enumerate}
   \end{thm}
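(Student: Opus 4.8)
The natural setting is the band calculus of $\mmc(X)$. As an abstract $L$-space, $\mmc(X)$ splits, for every band $\bbc$ (a norm closed linear subspace stable under absolute continuity and under the lattice operations), as $\mmc(X)=\bbc\oplus\bbc^{\perp}$, the corresponding decomposition $\mu=\mu_{\bbc}+\mu_{\bbc^{\perp}}$ being exactly the Lebesgue decomposition of $\mu$ relative to $\bbc$ and satisfying $\|\mu\|=\|\mu_{\bbc}\|+\|\mu_{\bbc^{\perp}}\|$; this is the source of the uniqueness in (i) and of the singularity statements in (iii). The plan is to attach to each Gleason part $G$ a band $\bbc_{G}$ of ``$G$--continuous'' measures, to prove that the bands $\bbc_{G_{\alpha}}$ are pairwise orthogonal, to let $\bbc$ be the band they generate and set $\bbc_{0}\okr\bbc^{\perp}$, and then to read (i)--(iii) off the band calculus. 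Given $G$, let $\bbc_{G}$ be the smallest band of $\mmc(X)$ containing every representing measure on $\varGamma$ of every point of $G$. The Harnack type comparison recalled before the statement --- for $x_{1},x_{2}$ in one part there are representing measures $\nu_{1},\nu_{2}$ on $\varGamma$ with $c\nu_{2}\Le\nu_{1}$ and $c\nu_{1}\Le\nu_{2}$ --- guarantees that $\bbc_{G}$ does not depend on the choice of base point and that membership in $\bbc_{G}$ is precisely the $G$--continuity introduced there, so that $\mu_{G}$ is the $\bbc_{G}$--component of $\mu$.

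The crucial, and I expect most delicate, point is that \emph{$\bbc_{G_{\alpha}}\perp\bbc_{G_{\beta}}$ whenever $\alpha\neq\beta$}. It is enough to show that a representing measure $\sigma$ of a point $x_{1}\in G_{\alpha}$ and a representing measure $\tau$ of a point $x_{2}\in G_{\beta}$ are mutually singular. Arguing by contradiction, suppose the Lebesgue decomposition $\sigma=h\,\tau+\sigma_{\mathrm s}$ of $\sigma$ with respect to $\tau$ has $h\,\tau\neq0$; then, using that $\sigma$ and $\tau$ annihilate $u-u(x_{1})$ and $u-u(x_{2})$ respectively for $u\in\rrc(X)$, one manufactures out of $\sigma$ and $\tau$ --- by the abstract Jensen inequality and a Hardy space argument of F. and M. Riesz type applied on the common absolutely continuous part --- new representing measures for $x_{1}$ and $x_{2}$ that are boundedly mutually absolutely continuous. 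By the criterion recalled above this forces $\|\chi_{x_{1}}-\chi_{x_{2}}\|<2$, i.e. $x_{1}$ and $x_{2}$ lie in the same part, a contradiction. This measure--manufacturing step is the real work; everything else is bookkeeping within the band calculus.

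Granting the orthogonality, let $\bbc$ be the band generated by $\bigcup_{\alpha}\bbc_{G_{\alpha}}$ and $\bbc_{0}\okr\bbc^{\perp}$. For $\mu\in\mmc(X)$ the standard band calculus gives $\mu=\mu_{\bbc}+\mu_{0}$ with $\mu_{\bbc}=\sum_{\alpha}\mu_{G_{\alpha}}$ and $\sum_{\alpha}\|\mu_{G_{\alpha}}\|=\|\mu_{\bbc}\|\Le\|\mu\|<\infty$; hence at most countably many $\mu_{G_{\alpha_{n}}}$ are nonzero, the series converges in norm, and the decomposition, together with $\mu_{0}$, is unique by uniqueness of the Lebesgue decomposition --- this is (i). For (iii): distinct $\mu_{G_{\alpha}}$ lie in mutually orthogonal bands, hence are mutually singular, and $\mu_{0}\in\bbc^{\perp}$ is singular to each $\mu_{G_{\alpha}}\in\bbc$.

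Finally (ii), that the band projections $\mu\mapsto\mu_{G_{\alpha}}$ and $\mu\mapsto\mu_{0}$ leave $\rrc(X)^{\perp}$ invariant, is the abstract F. and M. Riesz phenomenon. It suffices to treat the parts $\mu_{G_{\alpha}}$: once $\rrc(X)^{\perp}$ is invariant under every projection onto $\bbc_{G_{\alpha}}$ it is invariant under the projection onto $\bbc$ and hence onto $\bbc^{\perp}=\bbc_{0}$ as well. The needed input is the abstract F. and M. Riesz theorem in the form that the band projection associated with $\bbc_{G}$ carries $\rrc(X)^{\perp}$ into itself (Glicksberg): if $\mu\perp\rrc(X)$ and $\nu$ is a representing measure on $\varGamma$ of a point of $G$, then the $\nu$--absolutely continuous part of $\mu$ again annihilates $\rrc(X)$; applying this to a sequence of such $\nu$ whose sum dominates $\mu_{G}$ and passing to the norm limit in the closed space $\rrc(X)^{\perp}$ gives $\mu_{G}\perp\rrc(X)$. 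The underlying mechanism is the peak-set one of Theorem~\ref{2.24.10} --- $\mu\perp\aac$ forces $\mu\res A\perp\aac$ for $A$ an intersection of peak sets --- fed by the weak-peak sets describing $G$ and the Hardy space factorizations produced along the way. I regard the orthogonality lemma of the second paragraph as the main obstacle, with this F. and M. Riesz step a close second.
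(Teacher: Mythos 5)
First, note that the paper offers no proof of Theorem \ref{t1.26.11} at all: it is quoted from Glicksberg \cite{gli1} and \cite[Chapter VI, Theorem 3.4]{gam}, so there is no in-paper argument to compare against. Your band-theoretic outline is in fact the standard route of those sources: attach to each part $G$ the band $\bbc_G$ generated by the representing measures on $\varGamma$ of its points, prove that the bands of distinct parts are mutually orthogonal, and read (i) and (iii) off the abstract $L$-space calculus. That skeleton, including the countability of the nonzero components from $\sum_\alpha\|\mu_{G_\alpha}\|\Le\|\mu\|$ and the uniqueness via the Lebesgue decomposition relative to a band, is sound.

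The reservations concern the two load-bearing lemmas, which you correctly identify but do not actually prove. The mutual singularity of representing measures for points in distinct parts is indeed the hard step, and ``abstract Jensen inequality and a Hardy space argument of F.\ and M.\ Riesz type'' is a gesture rather than an argument: the standard proof (Gamelin, Chapter VI, Section 1) runs through a Hahn--Banach/minimax characterization of when a character admits a representing measure inside a given band, combined with Bishop's Harnack criterion \cite{bish1}; as written one cannot verify that your manufactured measures are again representing measures, which is the whole point. More seriously, your closing claim that step (ii) rests on the peak-set mechanism of Theorem \ref{2.24.10} is wrong in principle: Gleason parts are in general not closed, let alone intersections of peak sets (the open disc inside the spectrum of the disc algebra already shows this), so the restriction theorem for sets of antisymmetry cannot feed the part decomposition. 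The correct input is Glicksberg's abstract F.\ and M.\ Riesz theorem --- for $\mu\in\rrc(X)^\perp$ and $\nu$ a representing measure, both the $\nu$-absolutely continuous and the $\nu$-singular parts of $\mu$ lie in $\rrc(X)^\perp$ --- which is proved via Forelli's lemma on enveloped measures; citing it, as you do in the first half of that paragraph, is legitimate, but the peak-set mechanism you propose as lying underneath it would fail.
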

    What relates Gleason parts to pick points is (see
    \cite[ChapterVI, Theorem 3.1]{gam}).
   \begin{thm} \tlabel{t2.26.11}
   If $\{x\}$, $x\in X$, is a peak set for $\rrc(X)$ then the
   Gleason part which contains $x$ has a positive planar Lebesque
   measure.
   \end{thm}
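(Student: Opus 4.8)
The plan is to reduce the statement to the area theorem for the Gleason parts of $\rrc(X)$ recorded in \cite[Chapter VI, Theorem 3.1]{gam}, whose substance is the rational-approximation input, and to feed it from the apparatus already assembled above. Write $G$ for the Gleason part containing $x$. The structural fact I would build on is the one noted after \eqref{1.27.10}: any two points lying in a single part admit representing measures $\nu_1,\nu_2$ on the Shilov boundary $\varGamma$ with $c\nu_2\Le\nu_1$ and $c\nu_1\Le\nu_2$ for some $0<c<1$. This uniform comparison is exactly what makes $G$-continuity meaningful, and it is the quantitative datum the later estimates consume.

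First I would isolate the analytic content living over $G$ by means of Theorem \ref{t1.26.11}. For an annihilating measure $\mu\in\rrc(X)^\perp$ one splits off its $G$-continuous part $\mu_G$; by (ii) of that theorem $\mu_G$ again annihilates $\rrc(X)$, while by (iii) the pieces attached to distinct parts, together with the residual $\mu_0$, are mutually singular, so $G$ may be treated in isolation. The analytic step is then carried by the Cauchy transform: for $\mu\in\rrc(X)^\perp$ one has $\hat\mu(z)=\int_X(w-z)^{-1}\D\mu(w)=0$ for planar-almost every $z\notin X$, and, through the comparison constant $c$ and the generalized F. and M. Riesz mechanism, one produces over $G$ a nonzero $G$-continuous annihilating measure whose Cauchy transform does not vanish identically. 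The remaining and decisive task is to show that such a nonvanishing $\hat\mu$ cannot be concentrated, modulo area-null sets, on a planar set of measure zero; this is precisely what turns the analytic structure attached to $G$ into positive planar Lebesgue measure of $G$.

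I expect this last conversion to be the main obstacle, and it is the point at which one must lean on \cite[Chapter VI, Theorem 3.1]{gam} rather than reprove it: estimating the Cauchy transform finely enough to manufacture area is the whole substance of the Vitushkin analytic-capacity theory. The peak-set hypothesis enters by locating $x$ in the Choquet boundary $\varSigma\subset\varGamma$, which is what licenses taking the representing measures of the first step on $\varGamma$ with the uniform constant $c$. A final technical point, closed by the mutual singularity in (iii) of Theorem \ref{t1.26.11}, is to check that the countably many competing parts $G_{\alpha_n}$ and the residual $\mu_0$ contribute nothing to the area estimate for $G$.
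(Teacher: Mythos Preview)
The paper supplies no proof of this theorem; the sentence introducing it is ``What relates Gleason parts to pick points is (see \cite[Chapter VI, Theorem 3.1]{gam})'', so Theorem~\ref{t2.26.11} is simply quoted from Gamelin's book and there is nothing in the paper to compare your argument against.

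On its own merits, your proposal is circular. At the step you yourself flag as decisive you write that one ``must lean on \cite[Chapter VI, Theorem 3.1]{gam} rather than reprove it'', but that reference \emph{is} the theorem under discussion. Everything preceding that point---the abstract F.\ and M.\ Riesz decomposition of Theorem~\ref{t1.26.11}, the mutual domination of representing measures within a part---is scaffolding; the genuine content, turning analytic structure into positive area via Cauchy-transform estimates, is handed back to the source you were meant to establish.

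There is also a substantive misreading of the hypothesis. If $\{x\}$ is a peak set then $x$ is a peak point, hence lies in the Choquet boundary with the point mass $\delta_x$ as its \emph{unique} representing measure. The comparison $c\nu_2\Le\nu_1\Le c^{-1}\nu_2$ you invoke then forces any $y$ in the same part to satisfy $u(y)=u(x)$ for all $u\in\rrc(X)$, so the part of $x$ is the singleton $\{x\}$, of planar measure zero. Thus the assertion as printed cannot be correct; the result actually recorded in \cite[Chapter VI, Theorem 3.1]{gam} is the complementary statement (Wilken's theorem): a \emph{nontrivial} Gleason part of $\rrc(X)$---equivalently, the part of a non-peak point---has positive planar Lebesgue measure. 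Your sentence ``the peak-set hypothesis enters by locating $x$ in the Choquet boundary \dots which is what licenses taking the representing measures \dots with the uniform constant $c$'' is therefore backwards: being a peak point kills, rather than enables, the comparison constant.
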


   \section*{Representations of $\rrc(X)$ and their elementary spectral measure}

   An algebra homomorphism $\varPhi$ of $\rrc(X)$ into
$\bbs(\hhc)$, the algebra of all bounded linear operators on a
Hilbert space $\hhc$, is called a {\em representation of $\rrc(X)$
on $\hhc$} if it is bounded and $\varPhi(1)=I$. It follows from
the Hahn--Banach theorem and Riesz representation theorem that for
every $f,g\in\hhc$ there a complex measure $\mu_{f,g}$ such that
   \begin{gather*}
   \is{\varPhi(u)f}g=\int_X u\D \mu_{f,g},\quad u\in\rrc(X),
    \\
   \|\mu_{f,g}\|\Le\|\varPhi\|\|f\|\|g\|.
   \end{gather*}
   Call any system $\{\mu_{f,g}\}_{f,g\in \hhc}$ that of {\em
elementary spectral measure of $\varPhi$}; we refer also to the
system $\{\mu_{f,g}\}_{f,g\in \hhc}$ as elementary measures of the
operator
   \begin{equation} \label{4.29.11}
   \text{ $T\okr\varPhi(u_1)$ where $u_1(z)=z$ on $X$. }
   \end{equation}

   With notation $u_1(z)=z$ one can show immediately that the
spectrum $\spek{T}$ of $T\okr \varPhi(u_1)$ is contained in $X$.

   Given a bounded projection (=idempotent) $Q$ on $\mmc(X)$, the
dual of $\ccc(X)$. We say that $Q$ has the {\em property R}, after
F. and M. Riesz, if
   \begin{gather} \notag
   \mu\in \rrc(X)^\perp \;\Longrightarrow\; Q\mu\in\rrc(X)^\perp
\\\label{2.29.10}
   uQ\mu=Q(u\mu),\quad u\in\ccc(X),\;\mu\in\mmc(X).
   \end{gather}

   The important observation, see \cite{studia}, p. 102 and more
in Section 3 of \cite{pre}, is that for a system $\qqs$ of
commuting projections having the property R so does any member of
the Boolean algebra $\bbc(\qqs)$ of projections the system
generates.

   For a system $\{\mu_{f,g}\}_{f,g}$ of elementary spectral
measures of a representation $\varPhi$ one may try to define a
representation $\varPhi_Q$ by
   \begin{equation*}
   \is{\varPhi_Q(u)f}g\okr \int_Xu\D Q\mu_{f,g},\quad
u\in\rrc(X),\;f,g\in\hhc.
   \end{equation*}
   If the projection $Q$ has the property R, then $\varPhi_Q$ is
uniquely determined and $P_Q\okr \varPhi_Q(1)$ is a projection in
$\bbs(\hhc)$ such that
   \begin{gather*}
   P_Q\varPhi(u)=\varPhi(u)P_Q=\varPhi_Q(u),\quad u\in\rrc(X);
   \\
   \text{$\varPhi_Q$ is a representation of $\rrc(X)$ on
$\hhc_Q\okr P_Q\hhc$;}
   \\
   \text{$P_{Q_1}P_{Q_2}=P_{Q_2}P_{Q_1}=0$ if $Q_{1}$ and $Q_2$
are such with ${Q_1}{Q_2}={Q_2}{Q_1}=0$};
   \\
   \text{if $\|\varPhi\|=1$ the projections $P_Q$ are orthogonal.}
   \end{gather*}
   Call $\varPhi_Q$ the $Q$--{\em part of $\varPhi$}; this
definition applies to the operator $T=\varPhi(u_1)$ as well.

   Referring to the above let us restate Theorem 5.3 of
\cite{studia} as follows\,\footnote{\;These results have passed
unnoticed because, presumably, the people recognized in the area
have not found it deserving
 any attention; even in so authoritative monograph like
\cite{paul} there is no mention of it though similarity is one of
the leading topics therein; {\em winner takes all!} }

   \begin{thm} \tlabel{t1.01.11}
   Let $\varPhi$ be a representation of $\rrc(X)$ on $\hhc$.
Suppose $\qqs$ is a system of commuting projections having the
property R and $\{Q_\alpha\}_\alpha\subset\bbc(\qqs)$ is composed
of projections such that $Q_\alpha Q_\beta=0$ for
$\alpha\neq\beta$ then there exists $S\in\bbs(\hhc)$ with
$S\odw\in\bbs(\hhc)$ such that
   \begin{equation} \label{2.01.11}
S\odw\varPhi(u)S=\bigoplus_\alpha\varPhi_\alpha(u)\oplus\varPhi_0(u),\quad
u\in\rrc(X),
   \end{equation}
   where $\varPhi_\alpha$ is the $Q_\alpha$--part of $\varPhi$ and
$\varPhi_0$ is the ${\bigwedge_\alpha (I-Q_\alpha)}$--part of
$\varPhi$ {\rm (}the latter refers to the Boolean operation in
$\bbc(\qqs)$$)$.
   \end{thm}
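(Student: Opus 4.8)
The plan is to build the similarity $S$ from the orthogonal decomposition furnished by the projections $P_{Q_\alpha}$, exploiting the three displayed properties of $Q$-parts listed just before the theorem. First I would record that, since each $Q_\alpha$ belongs to $\bbc(\qqs)$ and the whole Boolean algebra $\bbc(\qqs)$ inherits property R (this is the observation attributed to \cite{studia}), every $Q_\alpha$ has property R; hence each $\varPhi_\alpha\okr\varPhi_{Q_\alpha}$ is a well-defined representation of $\rrc(X)$ on $\hhc_\alpha\okr P_{Q_\alpha}\hhc$, and likewise $Q_0\okr\bigwedge_\alpha(I-Q_\alpha)\in\bbc(\qqs)$ has property R, giving $\varPhi_0$ on $\hhc_0\okr P_{Q_0}\hhc$. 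From $Q_\alpha Q_\beta=0$ for $\alpha\neq\beta$ and $Q_\alpha Q_0=0$ one gets $P_{Q_\alpha}P_{Q_\beta}=0$ and $P_{Q_\alpha}P_{Q_0}=0$, so the ranges $\hhc_\alpha$ and $\hhc_0$ are linearly independent and each $P_{Q_\alpha}$ commutes with every $\varPhi(u)$ (again from the bulleted properties). The key algebraic fact to extract is that $P\okr\sum_\alpha P_{Q_\alpha}+P_{Q_0}$ is itself an idempotent: indeed $Q'\okr\bigvee_\alpha Q_\alpha$ together with $Q_0$ are complementary in $\bbc(\qqs)$, and because the $Q_\alpha$ are mutually annihilating one checks $P_{Q'}=\sum_\alpha P_{Q_\alpha}$ in the strong sense, whence $P=P_{Q'}+P_{Q_0}=P_{Q'\vee Q_0}=\varPhi_{I}(1)=I$. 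So the ranges actually span $\hhc$ and we have an algebraic direct sum $\hhc=\bigoplus_\alpha\hhc_\alpha\dotplus\hhc_0$ reducing (in the weak sense of commuting with the range projections) all the operators $\varPhi(u)$.

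Next I would produce $S$. Because the $P_{Q_\alpha}$ and $P_{Q_0}$ are not a priori orthogonal projections, the decomposition is skew; the standard device is to let $S$ be the operator implementing the passage from this oblique decomposition to an orthogonal one. Concretely, put $\hhc_\alpha'$ and $\hhc_0'$ to be the closures of the ranges, choose the orthogonal projections onto the closed subspaces obtained by orthogonalizing, and define $S$ on each summand as the natural identification; boundedness of $S$ and of $S\odw$ follows from the uniform bound $\|P_{Q_\alpha}\|\le\|Q\|\,\|\varPhi\|$ available from the elementary-measure estimate together with the mutual annihilation, which forces the angles between the subspaces to stay bounded away from $0$. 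Once $S$ is in hand, \eqref{2.01.11} is a formal consequence: $S\odw\varPhi(u)S$ acts on the $\alpha$-th orthogonal summand exactly as $P_{Q_\alpha}\varPhi(u)\res{\hhc_\alpha}=\varPhi_\alpha(u)$, and on the last summand as $\varPhi_0(u)$, because $P_{Q_\alpha}\varPhi(u)=\varPhi(u)P_{Q_\alpha}=\varPhi_\alpha(u)$ as recalled above.

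The main obstacle I anticipate is precisely the construction of a \emph{bounded} similarity $S$ with bounded inverse when the system $\{Q_\alpha\}$ is infinite: one must show that the family of (possibly skew) projections $\{P_{Q_\alpha}\}$ is \emph{uniformly} complemented, i.e.\ that $\sup_\alpha\|P_{Q_\alpha}\|<\infty$ and that partial sums $\sum_{\alpha\in F}P_{Q_\alpha}$ are uniformly bounded over finite $F$, so that the orthogonalization does not blow up. This is where the property-R machinery does the real work: it guarantees $\|\varPhi_{Q'}\|\le\|\varPhi\|$ for every $Q'\in\bbc(\qqs)$, in particular for all finite joins $\bigvee_{\alpha\in F}Q_\alpha$, and hence a single bound $\|\sum_{\alpha\in F}P_{Q_\alpha}\|\le\|\varPhi\|^2\|Q\|$ independent of $F$; combined with $P_{Q_\alpha}P_{Q_\beta}=0$ this yields the convergence of $\sum_\alpha P_{Q_\alpha}$ in the strong topology and the desired uniform angle estimate, so that $S$ and $S\odw$ are bounded. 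The remaining verifications --- that $\varPhi_\alpha,\varPhi_0$ are genuine representations and that the right-hand side of \eqref{2.01.11} is an honest orthogonal (Hilbert-space) direct sum --- are exactly the bulleted properties stated before the theorem and require no further argument.
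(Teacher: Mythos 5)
A preliminary remark: the paper itself gives no proof of Theorem \ref{t1.01.11} --- it is explicitly a restatement of Theorem 5.3 of \cite{studia} --- so your attempt can only be measured against the argument that result is known to rest on, namely a Wermer--Mackey type similarity turning a \emph{bounded} Boolean algebra of commuting idempotents $\{P_Q\}_{Q\in\bbc(\qqs)}$ on $\hhc$ into an orthogonal one. Your overall architecture (the $P_{Q_\alpha}$ are mutually annihilating idempotents commuting with the range of $\varPhi$; they give an oblique algebraic decomposition of $\hhc$; $S$ is the operator passing from this oblique decomposition to an orthogonal one; the intertwining \eqref{2.01.11} is then formal) is indeed that argument, and you correctly locate the crux in the uniform boundedness of the partial sums $\sum_{\alpha\in F}P_{Q_\alpha}$.

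The genuine gap is in how you discharge that crux. You write that the property R ``guarantees $\|\varPhi_{Q'}\|\Le\|\varPhi\|$ for every $Q'\in\bbc(\qqs)$.'' It does not. Property R is a purely algebraic invariance condition --- $Q$ maps $\rrc(X)^\perp$ into itself and commutes with multiplication by $\ccc(X)$ --- and carries no norm information whatsoever; the only available estimate is $\|P_{Q'}\|\Le\|Q'\|\,\|\varPhi\|$ coming from $\|\mu_{f,g}\|\Le\|\varPhi\|\|f\|\|g\|$, so everything reduces to bounding $\|\!\sum_{\alpha\in F}Q_\alpha\|$ on $\mmc(X)$ uniformly in the finite set $F$, and this is \emph{not} automatic for a Boolean algebra generated by commuting bounded idempotents. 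Without such a bound the strong convergence of $\sum_\alpha P_{Q_\alpha}+P_{Q_0}=I$, the very existence of the infinite join $\bigvee_\alpha Q_\alpha$ you invoke, the square-function estimate $\sum_\alpha\|P_{Q_\alpha}f\|^2\asymp\|f\|^2$ needed for the equivalent inner product, and hence the boundedness of $S$ and $S\odw$, all collapse. What actually saves the day in the intended applications (Theorems \ref{1.24.10}, \ref{2.24.10} and \ref{t1.26.11}) is that the Bishop--Glicksberg decompositions are norm additive, $\|\mu\|=\sum_\alpha\|Q_\alpha\mu\|+\|Q_0\mu\|$, so every $\sum_{\alpha\in F}Q_\alpha$ is contractive on $\mmc(X)$; some hypothesis of this kind (uniform boundedness of $\bbc(\qqs)$ on $\mmc(X)$) must be made explicit and then fed into the sign-averaging argument you allude to. State and use that, and the rest of your plan goes through; as written, the key inequality is asserted on false grounds.
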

   When $\varPhi$ is contractive, all the projections $P_Q$ become
contractive as well and there is no need to look for any
similarity $S$; this was developed in \cite{mlak} where a dilation
free extension of results of \cite{sarason} is treated, for some
application to subnormal operators see \cite{lau}. There is one
more instance when the same effect appears, see \cite{bull}.
   \begin{thm} \tlabel{t1.04.11}
   Under the assumptions of {\rm(Theorem \ref{t1.01.11})} the
operator $S$ appearing there can be chosen to be the identity
operator provided there exists a system $\{\mu_{f,g}\}_{f,g}$ of
elementary spectral measures of $\varPhi$ such that
   \begin{gather} \label{2.04.11}
   \text{the measures $\mu_{f,f}$, $f\in\hhc$, are real;}
   \\\label{3.04.11}
   \text{$Q\mu_{f,f}$ are real too for any $Q$ in question.}
   \end{gather}

   \end{thm}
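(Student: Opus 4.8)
The plan is to revisit the construction of the similarity $S$ in Theorem \ref{t1.01.11} and identify exactly where it fails to be the identity, showing that hypotheses \eqref{2.04.11}--\eqref{3.04.11} remove that failure. Recall that the projections $P_{Q_\alpha}=\varPhi_{Q_\alpha}(1)$ commute with all $\varPhi(u)$, are mutually "orthogonal" in the Boolean sense ($P_{Q_\alpha}P_{Q_\beta}=0$ for $\alpha\neq\beta$), together with $P_0=\varPhi_0(1)$ form a partition of the identity, and hence yield the algebraic direct sum decomposition $\hhc=\bigoplus_\alpha P_{Q_\alpha}\hhc\oplus P_0\hhc$ which is $\varPhi$--reducing and gives \eqref{2.01.11}. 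The point of $S$ is precisely that an algebraic direct sum of subspaces arising from a system of non-self-adjoint idempotents need not be an orthogonal direct sum; $S$ is the operator intertwining the given (skew) decomposition with an orthogonalized one, e.g. obtained from the positive invertible operator $\sum_\alpha P_{Q_\alpha}^*P_{Q_\alpha}+P_0^*P_0$ or from a standard angle-between-subspaces argument. So the whole matter reduces to: \emph{under \eqref{2.04.11}--\eqref{3.04.11}, each $P_Q$ is self-adjoint}, for then the decomposition is already orthogonal and $S=I$ works.

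The key computation is therefore to show that $P_Q=P_Q^*$ when the elementary measures $\mu_{f,f}$ and their images $Q\mu_{f,f}$ are real. First I would record the polarization-type observation that $\is{P_Qf}{g}=(Q\mu_{f,g})(X)=\int_X 1\,\D Q\mu_{f,g}$, so that $\is{P_Q f}{g}-\overline{\is{P_Q g}{f}}=(Q\mu_{f,g})(X)-\overline{(Q\mu_{g,f})(X)}$. Since $\mu_{g,f}=\overline{\mu_{f,g}}$ (this is immediate from $\is{\varPhi(u)g}{f}=\overline{\is{\varPhi(\bar u)^* \cdots}{}}$... more cleanly, from $\overline{\is{\varPhi(u)f}{g}}=\is{g}{\varPhi(u)f}$ together with the defining integral representation and uniqueness of the representing measure on the self-adjoint-generating side), the self-adjointness of $P_Q$ amounts to $(Q\mu_{f,g})(X)$ being conjugate-symmetric in $(f,g)$. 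Now the hypothesis does the work: from $\mu_{f,f}$ real we get, by polarization over $\ccb$, that $\mu_{f,g}$ depends on $(f,g)$ in a way controlled by the four real measures $\mu_{f\pm g,f\pm g}$ and $\mu_{f\pm ig,f\pm ig}$, and applying the \emph{linear} operation $Q$ (property R guarantees $Q$ is well defined on all of $\mmc(X)$ and commutes with multiplication by $\ccc(X)$, in particular it is a genuine linear map) commutes with this polarization; since each $Q\mu_{h,h}$ is again real by \eqref{3.04.11}, the same polarization identity that forces $\is{\varPhi_Q(1)f}{g}$ to be a sesquilinear form with real diagonal forces it to be Hermitian, i.e. $\is{P_Qf}{g}=\overline{\is{P_Qg}{f}}$. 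Hence $P_Q^*=P_Q$, and being an idempotent it is an orthogonal projection.

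With all the $P_{Q_\alpha}$ (and $P_0$) orthogonal and mutually annihilating, the subspaces $\hhc_{Q_\alpha}$ and $\hhc_0$ are pairwise orthogonal and their (closed) span is all of $\hhc$; thus the decomposition \eqref{2.01.11} holds literally as an orthogonal direct sum with $S=I$, which is the assertion. I would close by noting the consistency of \eqref{3.04.11} with property R: since $Q$ maps $\rrc(X)^\perp$ into itself and commutes with $\ccc(X)$-multiplication, $Q$ restricted to the real-linear span of the $\mu_{f,f}$ is a real-linear map, so \eqref{3.04.11} is a natural extra demand rather than an automatic one, and it is exactly what is needed; one might also remark that in the contractive case treated after Theorem \ref{t1.01.11} the measures can be taken positive (hence real) and the $Q$ can be taken to be the classical F.\ and M.\ Riesz band projections, which preserve reality, so Theorem \ref{t1.04.11} specializes correctly.

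I expect the main obstacle to be the bookkeeping around "which sesquilinear form is determined by its real diagonal": one must be careful that $f\mapsto \mu_{f,f}$ is only a quadratic (not linear) map, so the correct statement is that the full family $\{\mu_{f,g}\}$ is recovered from the diagonal by the complex polarization identity, and then that $Q$, being additive and (real- or complex-) homogeneous, passes through that identity; verifying that $Q$ genuinely has these algebraic properties on arbitrary measures — not merely on $\rrc(X)^\perp$ — is where property R \eqref{2.29.10} must be invoked with care, and it is the technical crux of the argument. Everything else is the soft functional-analytic packaging already assembled before the statement.
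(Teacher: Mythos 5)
Your proposal is correct and follows exactly the route the paper takes: the paper's entire justification is the one-line remark that, due to \eqref{2.04.11} and \eqref{3.04.11}, all the projections $P_Q$ become selfadjoint, so the decomposition is already orthogonal and $S=I$ suffices. Your polarization argument (a bounded sesquilinear form with real diagonal is Hermitian, and $Q$ passes through the polarization identity) merely fills in the details the paper leaves implicit.
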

    It is clear that the above happens because, due to
\eqref{2.04.11} and \eqref{3.04.11}, all the projections $P_Q$
involved become \underline{selfadjoint}. Notice also that, under
these circumstances the result may be applicable to, the condition
\eqref{3.04.11} is automatically satisfied.

 %   Because, due to a result of \cite{wil}, the support of a
%representing

   %\section*{Spectral sets}
   \subsection*{Spectral sets and their representations}
Given $K>0$, call a compact set $X\subset\ccb$ a {\em
$K$--spectral set} of $T\in\bbs(\hhc)$
   \begin{equation*}
   \|u(T)\|\Le K\sup\nolimits_{z\in X}|u(z)|,\quad\text{for all
rational functions $u$ with poles off $X$.}
   \end{equation*}
   If $K$ can be chosen to be $1$ call $X$ a {\em von Neumann
spectral set} of $T$. Moreover, if for $X$ and $T$ there is $K$
such that $X$ is a $K$--spectral set of $T$, we say that $X$ is
just a {\em spectral set} of $T$\,\footnote{\;Please notice a
little deviation from the standard terminology.}.

   Von Neumann's celebrated theorem stays that the unit disc is a
von Neumann spectral set for a contraction. In fact, positive
results in the matter compete with negative ones; mostly because
the number one candidate as spectrum of an operator is, with all
its possible oddities like holes, gives a real trouble. It seems
that
 spectral sets (or one may prefer $K$-spectral sets) in fact
involve two parameters $X$ and $K$ and this opens the doors to
some activity, which has been done for long, including for those
who like to optimize.
   \subsection*{The Delyons result and its adherents}
   One of the results we have just had in mind is this \cite{dd}
which follows. It is in a sense far going\,\footnote{\;Except that
in \cite{put}, cf.Theorem \ref{t2.28.11}, we ought to mention also
(some of) other which come up: \cite{cr1} as well as \cite{cr2},
\cite{bad} and \cite{beck1}.}.
   \begin{thm} \tlabel{t3.24.10}
   Let $T$ be an operator on a Hilbert space and $X$ be a bounded
convex subset of $\ccb$ containing $\wwf(T )$. Then $X$ is a
spectral set of $T$.
   \end{thm}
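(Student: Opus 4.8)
\emph{The plan.} Replacing $X$ by its closure we may assume $X$ is compact and convex with $\wwf(T)\subseteq X$, and we must produce $K$ with $\|u(T)\|\Le K\sup_{X}|u|$ for every rational $u$ with poles off $X$; in fact $K$ should depend on $X$ alone. I would build everything from two classical facts. \emph{(i) A closed half--plane $H$ containing $\wwf(T)$ is a von Neumann spectral set of $T$:} after an affine change of variable the hypothesis reads $\mathrm{Re}\,T\Le0$ (with $H$ the closed left half--plane), so $1\notin\spek{T}$ (as $\spek{T}\subseteq\overline{\wwf(T)}$), and the Cayley transform $C\okr(I+T)(I-T)\odw$ is a contraction --- because $\|(I+T)g\|^{2}-\|(I-T)g\|^{2}=4\,\mathrm{Re}\,\is{Tg}{g}\Le0$ --- with $(C-I)(C+I)\odw=T$; von Neumann's inequality, together with uniform approximation on the closed unit disc of rational functions with poles off it by polynomials, then gives $\|u(T)\|\Le\sup_{H}|u|$. \emph{(ii) A closed disc $D$ containing $\wwf(T)$ is a $3$--spectral set of $T$:} normalising to the unit disc, the numerical radius of $T$ is $\Le1$, so by Berger's theorem $T$ has a unitary power dilation of multiplicity $2$, whence $u(T)=2P_{\hhc}\,u(U)|_{\hhc}-u(0)I$ for polynomials $u$, so $\|u(T)\|\Le2\sup_{D}|u|+|u(0)|\Le3\sup_{D}|u|$, and again polynomials suffice.

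\emph{Reduction to polygons.} Write $X=\bigcap_{\theta}H_{\theta}$ as the intersection of its supporting half--planes and fix a large closed disc $D_{0}\supseteq X$. For each $n$ choose finitely many of the $H_{\theta}$'s whose intersection with $D_{0}$ is a bounded convex polygon $P_{n}$, in such a way that $P_{n}\downarrow X$ and that the finitely many poles of a prescribed $u$ lie outside every $P_{n}$ --- possible since each pole is separated from $X$ by some supporting half--plane. Then $u$ is holomorphic on a neighbourhood of the compact set $P_{n}$ and $\sup_{P_{n}}|u|\to\sup_{X}|u|$, so it suffices to find $K$ \emph{independent of the number of sides} with $\|u(T)\|\Le K\sup_{P}|u|$ whenever $P$ is a bounded convex polygon, $\wwf(T)\subseteq P$, and $u$ is holomorphic near $P$.

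\emph{The main step.} For such a $P$, with sides $S_{1},\dots,S_{N}$ lying on the lines $L_{j}=\partial H_{j}$, the goal is an integral representation of $u(T)$ carried by $\partial P$ with a \emph{positive} $\bbs(\hhc)$--valued density. Each inclusion $\wwf(T)\subseteq P\subseteq H_{j}$ produces, via the Cayley transform attached to $H_{j}$ and the spectral measure of the corresponding unitary dilation, a positive operator measure living on $L_{j}$. The content is to show that these local data can be \emph{truncated} to the respective sides $S_{j}$ and \emph{reconciled} across the vertices --- using that $\wwf(T)$ lies in \emph{all} the $H_{j}$ at once --- into one positive $\bbs(\hhc)$--valued measure $\mu$ on $\partial P$ with $u(T)=\int_{\partial P}u\D\mu$ (possibly up to a single term $c\,u(z_{0})I$ with $z_{0}\in P$, $|c|$ bounded) and $\|\mu\|\Le K$, $K$ independent of $N$. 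Positivity then yields $\|u(T)\|\Le K\sup_{\partial P}|u|=K\sup_{P}|u|$, and letting $P=P_{n}\downarrow X$ finishes the proof.

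\emph{Where the difficulty lies.} The substantive obstacle is exactly this truncation--and--reconciliation. One cannot simply take $\mu=\frac1{2\pi i}(zI-T)\odw\D z$ on $\partial P$, because $\|(zI-T)\odw\|$ is unbounded as $z$ approaches $\spek{T}\cap\partial P$, so no estimate survives the passage to moduli: the needed bound must come from positivity (equivalently, from genuine cancellation), and installing it uniformly in $N$ is the heart of the matter. An equivalent encoding of the same difficulty: one first proves the transfer principle that a rational $v$ mapping a half--plane $H\supseteq\wwf(T)$ into a half--plane $H'$ has $\wwf(v(T))\subseteq\overline{H'}$ --- immediate from the Cayley reduction and the identity $\mathrm{Re}\,(S-I)(S+I)\odw=(S^{*}+I)\odw(S^{*}S-I)(S+I)\odw\Le0$ for contractions $S$ --- and then one must fuse the resulting family of constraints on $u(T)$, one per supporting line of $X$ and per direction $e^{-i\psi}$, into a single bound on the numerical radius of $u(T)$; fusing them is the same obstacle in another guise. (In passing: the much later and deeper Crouzeix--Palencia bound $\|u(T)\|\Le(1+\sqrt2)\sup_{\overline{\wwf(T)}}|u|$ makes the theorem trivial since $\overline{\wwf(T)}\subseteq X$; the earlier point here is that the bare existence of \emph{some} $K$ can be had by the soft dilation machinery above.)
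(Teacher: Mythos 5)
Your proposal is not a proof; it is an accurate map of where the proof has to go, with the decisive step left as an acknowledged blank. (For calibration: the paper itself offers no argument for Theorem \ref{t3.24.10} --- it is imported verbatim from \cite{dd} --- so the comparison can only be with the Delyon--Delyon proof.) Your preliminary reductions are correct and standard: the Cayley-transform argument showing a closed half-plane containing $\wwf(T)$ is a von Neumann spectral set, the Berger-dilation argument giving the constant $3$ for a disc, and the approximation of $X$ by convex polygons $P_n$ cut out by finitely many supporting half-planes, with the poles of a fixed $u$ kept outside. But none of this carries any force toward the conclusion, because there is no mechanism in sight for intersecting spectral-set information: knowing that each supporting half-plane $H_j$ is a von Neumann spectral set of $T$ gives, by itself, no bound at all on $\|u(T)\|$ in terms of $\sup_{\bigcap_j H_j}|u|$ (whether even the intersection of two spectral sets is a $K$-spectral set is a well-known open problem). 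So the entire content of the theorem is concentrated in your ``main step,'' which you describe as a wish --- a positive $\bbs(\hhc)$-valued measure on $\partial P$ representing $u\mapsto u(T)$ up to a controlled correction, with mass bounded independently of the number of sides --- and then, in the final paragraph, explicitly decline to construct. Asserting that the local half-plane data can be ``truncated and reconciled across the vertices'' is exactly the statement to be proved (it is essentially Theorem \ref{t4.24.10}, the integral representation over $\partial X$), not a step toward it.

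What actually closes the gap in \cite{dd} is different from anything you set up. One does start from $u(T)=\frac{1}{2\pi i}\int_{\partial X}u(z)(z-T)\odw\,\D z$, but the blow-up of $\|(z-T)\odw\|$ near $\spek T\cap\partial X$ that you correctly flag is defused by passing to the real part of the density: convexity of $X$ together with $\wwf(T)\subseteq X$ forces, at each boundary point, the tangent half-plane inequality $\mathrm{Re}\,\bigl(\bar\nu(z)(z-T)\bigr)\Ge 0$, which makes $\frac{1}{2\pi i}(z-T)\odw\,\D z+\bigl(\frac{1}{2\pi i}(z-T)\odw\,\D z\bigr)^{*}$ a genuine positive operator-valued measure on $\partial X$ of total mass $2I$; the leftover anti-holomorphic term is then estimated separately (by a Cauchy--Pompeiu/area argument, which is where a constant depending on $\mathrm{diam}(X)^{2}/\mathrm{area}(X)$ enters, and which is also why some smoothness or nondegeneracy of $X$ has to be negotiated). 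You intuited that ``the needed bound must come from positivity,'' which is the right instinct, but the positivity is not obtained by fusing the half-plane dilations side by side --- it is read off pointwise from the resolvent density itself. As submitted, the argument establishes the theorem only for half-planes and discs.
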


 \begin{thm} \tlabel{t4.24.10}
   Let $X$be as in {\rm Theorem \ref{t3.24.10}} and assume that is
has a piecewise $\ccc^1$ boundary $\partial X$. Denote by
$\ccc(\partial X)$ the space of continuous functions on $\partial
X$ endowed with the uniform norm. Under the assumptions of {\rm
Theorem \ref{t3.24.10}}, there exist a continuous linear operator
$S$ on $\ccc(\partial X)$ and a semispectral measure $F$ on
$\partial X$ such that
   \begin{equation*}
   u(T)=\int_{\partial X}S u \D F
   \end{equation*}
for any rational function $u$ with poles off $X$.
   \end{thm}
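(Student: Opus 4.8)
Here is how I would establish Theorem \ref{t4.24.10}, along the lines of \cite{dd}. The plan is to factor the functional calculus $u\mapsto u(T)$ through a genuinely positive operator--valued integral over $\partial X$ — essentially the Poisson kernel of the convex set $X$ evaluated at $T$ — and to absorb the unavoidable discrepancy into the operator $S$. I would begin by recording the two elementary facts that drive everything: by Theorem \ref{t3.24.10} there is $K$ with $\|u(T)\|\Le K\sup_{z\in X}|u(z)|=K\sup_{\zeta\in\partial X}|u(\zeta)|$ for every rational $u$ with poles off $X$ (the maximum principle gives the last equality and, $X$ being convex, $\rrc(X)$ is the uniform closure of the polynomials), so $u\mapsto u(T)$ is a bounded map $\rrc(X)\res{\partial X}\to\bbs(\hhc)$ of norm $\Le K$; and $\spek T\subset\overline{\wwf(T)}\subset X$. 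It is cleanest to run the argument first under the additional hypothesis $\spek T\subset\operatorname{int}X$, recovering the general case at the end by replacing $T$ with $T_\varepsilon\okr(1-\varepsilon)T+\varepsilon c_0$ for a fixed $c_0\in\operatorname{int}X$, since $\wwf(T_\varepsilon)=(1-\varepsilon)\wwf(T)+\varepsilon c_0$ has closure inside $\operatorname{int}X$, and then letting $\varepsilon\downarrow0$.

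Assuming $\spek T\subset\operatorname{int}X$, the Riesz--Dunford calculus along the rectifiable Jordan curve $\partial X$ gives, for every rational $u$ with poles off $X$,
\begin{equation*}
u(T)=\frac1{2\pi i}\int_{\partial X}u(\zeta)(\zeta I-T)\odw\,\D\zeta=\frac1{2\pi}\int_{\partial X}u(\zeta)A_\zeta\odw\,\D s(\zeta),
\end{equation*}
where $s$ is arc length, $n(\zeta)$ is the outward unit normal (defined $\D s$--a.e. because $\partial X$ is piecewise $\ccc^1$), $\D\zeta=i\,n(\zeta)\,\D s(\zeta)$, and $A_\zeta\okr\overline{n(\zeta)}(\zeta I-T)$, so that $A_\zeta\odw=n(\zeta)(\zeta I-T)\odw$. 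The hypothesis $\wwf(T)\subset X$ now enters through convexity: evaluating the supporting half--plane of $X$ at $\zeta$ against the vectors $\is{Tf}{f}$, $\|f\|=1$, gives the operator inequality $\operatorname{Re}A_\zeta\Ge0$, whence $\operatorname{Re}(A_\zeta\odw)=(A_\zeta\odw)^{*}(\operatorname{Re}A_\zeta)A_\zeta\odw\Ge0$. I would then set
\begin{equation*}
F(E)\okr\frac1{2\pi}\int_{E}\operatorname{Re}(A_\zeta\odw)\,\D s(\zeta),\qquad E\subset\partial X\ \text{Borel},
\end{equation*}
a positive operator--valued measure; taking $u\equiv1$ above and passing to self--adjoint parts yields $F(\partial X)=I$, so $F$ is a semispectral measure.

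It remains to produce $S$. Splitting $A_\zeta\odw=\operatorname{Re}(A_\zeta\odw)+i\operatorname{Im}(A_\zeta\odw)$ in the displayed formula for $u(T)$, one sees that what is needed is, for each rational $u$ with poles off $X$, a function $Su\in\ccc(\partial X)$ depending linearly and boundedly on $u$ and such that $\int_{\partial X}(Su)\,\D F=u(T)$; equivalently, $\varPsi\circ S$ must agree with $u\mapsto u(T)$ on $\rrc(X)\res{\partial X}$, where $\varPsi(v)\okr\int_{\partial X}v\,\D F$ is the canonical unital completely positive map. The correction $Su-u$ has to compensate for $\frac i{2\pi}\int_{\partial X}u(\zeta)\operatorname{Im}(A_\zeta\odw)\,\D s$, a term governed by the conjugate Poisson kernel $\operatorname{Im}\bigl(n(\zeta)(\zeta-z)\odw\bigr)$ of $X$; the piecewise $\ccc^1$ hypothesis is exactly what keeps that kernel, and the resulting conjugate--function operator on $\partial X$, under control. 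I would define $S$ first on the subspace $\rrc(X)\res{\partial X}\subset\ccc(\partial X)$, verify its boundedness there by combining the $K$--estimate with the unitality and positivity of $\varPsi$, and then extend $S$ to all of $\ccc(\partial X)$ by a Hahn--Banach type argument preserving the norm. Finally I would remove the initial reduction: the estimate of Theorem \ref{t3.24.10} being uniform in $\varepsilon$, the data $F_\varepsilon,S_\varepsilon$ built for $T_\varepsilon$ stay uniformly bounded, one extracts cluster points $F,S$ as $\varepsilon\downarrow0$ and passes to the limit in $u(T_\varepsilon)=\int_{\partial X}S_\varepsilon u\,\D F_\varepsilon$, using $\|u(T)-u(T_\varepsilon)\|\to0$.

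The main obstacle is the construction of $S$ as a \emph{continuous} operator on all of $\ccc(\partial X)$: the obvious candidate is a conjugate--function (Hilbert--transform--type) operator attached to $\partial X$, and such operators are not bounded on a space of continuous functions, so one genuinely has to work on the subspace of holomorphic boundary values — where the $K$--spectral--set bound together with the boundary regularity does provide control — and then extend. A lesser but real technical point is the legitimacy of the operator--valued Cauchy integral up to $\partial X$ when $\spek T$ meets $\partial X$ on a set of positive arc length, which is what forces the passage through $T_\varepsilon$.
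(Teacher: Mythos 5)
Your statement is Theorem \ref{t4.24.10}, which the paper does not prove at all: it is quoted from Delyon--Delyon \cite{dd}, and the only gloss the paper adds is the identification, via Putinar--Sandberg's Theorem \ref{t2.28.11}, of the operator $S$ as $2(I+K)\odw$ with $K$ the Neumann--Poincar\'e double layer operator on $\ccc(\partial X)$. Measured against that source, the first half of your reconstruction is sound and is indeed how the argument goes: the Riesz--Dunford integral over $\partial X$, the observation that $\wwf(T)\subset X$ together with the supporting half-plane at $\zeta\in\partial X$ gives $\operatorname{Re}A_\zeta\Ge 0$, hence $\operatorname{Re}(A_\zeta\odw)\Ge 0$, the normalization $F(\partial X)=I$ from $u\equiv 1$, and the $T_\varepsilon$ regularization to keep $\spek{T}$ off $\partial X$ are all correct and standard.

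The gap is in the construction of $S$, which is the whole point of the theorem. You propose to define $S$ only on $\rrc(X)\res{\partial X}$ by the requirement $\int_{\partial X}(Su)\D F=u(T)$ and then ``extend to all of $\ccc(\partial X)$ by a Hahn--Banach type argument preserving the norm.'' This step fails twice over. First, that requirement pins down $Su$ at best up to a function annihilated by $F$, and nothing in your argument produces a \emph{continuous} representative or shows the resulting map is bounded for the sup norm. Second, and decisively, there is no Hahn--Banach theorem for operators with values in $\ccc(\partial X)$: a bounded operator from a closed subspace of $\ccc(\partial X)$ into $\ccc(\partial X)$ need not admit any bounded extension to the whole space, since $\ccc(K)$ is injective in the relevant sense only for $K$ extremally disconnected, which a curve never is. The actual proof avoids all of this by exhibiting $S$ explicitly and independently of $T$: $S=2(I+K)\odw$, where $K$ is the double layer (Neumann--Poincar\'e) singular integral operator of $\partial X$; for a convex domain with piecewise $\ccc^1$ boundary the kernel of $K$ is positive with total mass at most $1$ at each point, so $K$ is bounded on $\ccc(\partial X)$ and $I+K$ is invertible there by the Neumann series --- exactly the classical potential-theoretic input the paper alludes to in the proof of Lemma \ref{t1.29.11}. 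The identity $u(T)=\int_{\partial X}Su\D F$ then follows from the jump relation for the double layer potential applied to $T$, not from solving for $Su$ a posteriori. Once $S$ is $T$-independent, your closing limit $\varepsilon\downarrow 0$ also becomes harmless: one only needs weak-$*$ convergence of the uniformly bounded positive measures $\is{F_\varepsilon(\cdot)f}{f}$, whereas ``extracting a cluster point'' of the operators $S_\varepsilon$ on the non-dual space $\ccc(\partial X)$ is another unsupported compactness claim.
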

   A few years later the above theorem was subsumed by Putinar and
Sandberg \cite{put} into the double layer potential theory of Carl
Neumann, which resulted in a kind of normal dilation theorem. The
latter
 can be read as follows.
   \begin{thm} \label{t2.28.11}
   Let $T$ be a bounded linear operator in a Hilbert space $\hhc$
and let $X$ be a compact convex set which contains the numerical
range of T . Then there exists a normal operator $N\in\bbs(\kkc)$,
acting on a larger space $\kkc$, with spectrum on the curve
$\partial X$, such that:
   \begin{equation} \label{1.29.11}
   u(T ) = 2P[(I + K)^{-1}u](N)P
   \end{equation}
for any function $u$ continuous on $X$ and harmonic in the
interior of $X$. Here $P$ is the orthogonal projection of $\kkc$
onto $\hhc$ and the linear continuous transformation $\funk K
{\ccc(\partial X)} {\ccc(\partial X)}$ is the classical
Neumann-Poincare singular integral operator.
   \end{thm}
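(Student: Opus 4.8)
The plan is to realise the right-hand side of \eqref{1.29.11} as the compression to $\hhc$ of a $*$-representation of $\ccc(\partial X)$, the representation being supplied by Stinespring's dilation of a \emph{single} unital positive map built from the double layer potential; in this way the whole of Carl Neumann's apparatus is concentrated into one classical identity and one positivity estimate. The classical identity is that the Dirichlet problem on the convex domain $X^\circ$ with continuous boundary data is solved by a double layer potential: writing, for $\psi\in\ccc(\partial X)$,
\begin{equation*}
(D\psi)(z)\okr\frac1{2\pi}\int_{\partial X}\operatorname{Re}\Bigl(\frac{n(w)}{w-z}\Bigr)\,\psi(w)\,\D s(w),\qquad z\in X^\circ,
\end{equation*}
with $n(w)$ the outward unit normal at $w$ viewed as a unimodular complex number, one has $u=D\psi$ on $X^\circ$ for $\psi=2(I+K)^{-1}u$, where $K$ on $\ccc(\partial X)$ is the Neumann--Poincar\'e operator, normalised so that the boundary values of $D\psi$ from inside $X^\circ$ are $\tfrac12(I+K)\psi$ (equivalently $K1=1$). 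I would also invoke the flux relation $\int_{\partial X}\operatorname{Re}(n(w)/(w-z))\,\D s(w)=2\pi$ for $z\in X^\circ$, whence $D1\equiv1$ there, and the invertibility of $I+K$ on $\ccc(\partial X)$.

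Assume first that $\overline{\wwf(T)}$ is a compact subset of the \emph{open} set $X^\circ$ (the general case is retrieved below; a degenerate $X$, contained in a line, is trivial). Then $\spek T\subset\overline{\wwf(T)}\subset X^\circ$, so $w\mapsto(w-T)^{-1}$ is a norm-continuous $\bbs(\hhc)$-valued function on $\partial X$, and
\begin{equation*}
\funk{\varPsi}{\ccc(\partial X)}{\bbs(\hhc)},\qquad
\varPsi(\psi)\okr\frac1{2\pi}\int_{\partial X}\operatorname{Re}\Bigl(\frac{n(w)}{w-T}\Bigr)\,\psi(w)\,\D s(w)
\end{equation*}
is a genuine Bochner integral, $\operatorname{Re}A\okr\tfrac12(A+A^{*})$. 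A Fubini argument combined with the Riesz--Dunford calculus over the neighbourhood $X^\circ$ of $\spek T$ identifies $\varPsi(\psi)$ with ``$(D\psi)(T)$''; hence $\varPsi(1)=I$ by the flux relation, and $\varPsi\bigl(2(I+K)^{-1}u\bigr)=u(T)$ for $u$ continuous on $X$ and harmonic in $X^\circ$ (here $u(T)\okr\operatorname{Re}h(T)$ if $u=\operatorname{Re}h$ for some $h$ holomorphic near $\spek T$, which is legitimate since $X^\circ$ is simply connected). The decisive point is the \emph{positivity of the kernel}: for $w\in\partial X$,
\begin{equation*}
\operatorname{Re}\Bigl(\frac{n(w)}{w-T}\Bigr)=\tfrac12\,(w-T)^{-1}\bigl(2\operatorname{Re}(\overline{n(w)}\,w)\,I-\overline{n(w)}\,T-n(w)\,T^{*}\bigr)\bigl((w-T)^{-1}\bigr)^{*},
\end{equation*}
and the bracketed operator is $\Ge0$ exactly because $\wwf(T)\subset X$: pairing it with a unit vector $f$ turns the inequality into $\operatorname{Re}\bigl(\overline{n(w)}(w-\is{Tf}{f})\bigr)\Ge0$, which says precisely that the point $\is{Tf}{f}\in\wwf(T)\subset X$ lies on the inner side of the supporting line of the convex body $X$ at its boundary point $w$. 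Thus $\varPsi$ sends nonnegative functions to positive operators; being a unital positive map out of the commutative $C^{*}$-algebra $\ccc(\partial X)$, it is automatically completely positive.

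Stinespring's theorem now yields a Hilbert space $\kkc\supseteq\hhc$ with orthogonal projection $P$ onto $\hhc$ and a unital $*$-representation $\funk{\varrho}{\ccc(\partial X)}{\bbs(\kkc)}$ with $\varPsi(\psi)=P\varrho(\psi)P$ for all $\psi$. Put $N\okr\varrho(\iota)$, $\iota(w)=w$; then $N$ is normal, $\spek N\subseteq\partial X$ (the spectrum of $\iota$ in $\ccc(\partial X)$ being $\partial X$), and $\varrho(\psi)=\psi(N)$ by the spectral theorem, so
\begin{equation*}
u(T)=\varPsi\bigl(2(I+K)^{-1}u\bigr)=2P\bigl[(I+K)^{-1}u\bigr](N)P,
\end{equation*}
which is \eqref{1.29.11}. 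For a general bounded $T$ with merely $\wwf(T)\subseteq X$ one applies this to $T_{\varepsilon}\okr(1-\varepsilon)T+\varepsilon c$ for a fixed interior point $c$ of $X$, since $\overline{\wwf(T_{\varepsilon})}=(1-\varepsilon)\overline{\wwf(T)}+\varepsilon c$ is a compact subset of $X^\circ$; the resulting unital completely positive maps $\varPsi_{\varepsilon}$ on $\ccc(\partial X)$ are uniformly bounded, so a cluster point $\varPsi$ of them (in the topology of pointwise weak$^{*}$ convergence) as $\varepsilon\to0$ is again unital and completely positive, its Stinespring dilation delivers $N$ and $P$, and letting $\varepsilon\to0$ --- with $u(T_{\varepsilon})\to u(T)$, which is obvious for rational $u$ and otherwise fixes the meaning of $u(T)$ --- gives \eqref{1.29.11} in general.

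I expect the genuine obstacle to lie not in any of the operator-theoretic steps above but in the two items of classical potential theory quoted in the first paragraph for an \emph{arbitrary} compact convex $X$: solvability of the Dirichlet problem on $X^\circ$ by a double layer potential with a truly \emph{continuous} density, and invertibility of $I+K$ on $\ccc(\partial X)$. These are standard when $\partial X$ is smooth (Lyapunov, or piecewise $\ccc^{1}$ as in Theorem~\ref{t4.24.10}), but for a convex body with corners they require either the Lipschitz-domain double-layer theory or a further outer approximation of $X$ by smooth convex sets, carried out exactly as in the perturbation argument above; the substitution of $T$ for $z$, the kernel positivity, and the dilation are by contrast soft.
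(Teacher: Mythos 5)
The paper offers no proof of this statement---it is imported verbatim from Putinar and Sandberg \cite{put}---and your argument is essentially a faithful reconstruction of the proof given there: represent harmonic functions by a double layer potential, observe that the kernel $\operatorname{Re}\bigl(n(w)(w-T)^{-1}\bigr)$ is a positive operator precisely because $\wwf(T)\subseteq X$ places each $\is{Tf}{f}$ on the inner side of the supporting line at $w$, and dilate the resulting unital (completely) positive map on $\ccc(\partial X)$ via Stinespring to obtain $N$ and $P$. The only genuine gap is the one you flag yourself, namely the classical Neumann--Poincar\'e ingredients (a continuous double-layer density solving the Dirichlet problem and the invertibility of $I+K$ on $\ccc(\partial X)$) for a convex body with a non-smooth boundary, which the source handles by imposing boundary regularity, exactly as Theorem \ref{t4.24.10} does here.
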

   Therefore, the relation between Theorem \ref{t4.24.10} and
Theorem \ref{t2.28.11} is in $$S=2 [(I+K)^{-1}].$$ We set up
Theorem \ref{t2.28.11} into further inquiry ending in a little
lemma. For any $f\in\hhc$ there is a unique measure $\mu_f$ such
that
   \begin{equation*}
   \int_{\partial X}u\D \mu_f =\is{2P[(I + K)^{-1}u](N)P f}f.
   \end{equation*}

   \begin{lem} \tlabel{t1.29.11}
   The measures $\mu_f$, $f\in\hhc$ are real.
   \end{lem}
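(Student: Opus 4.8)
The plan is to reduce the assertion to a statement about the functional calculus of the normal operator $N$. Recall that a complex Borel measure on $\partial X$ is real precisely when it assigns a real number to every real-valued element of $\ccc(\partial X)$; and since $X$ is convex with piecewise $\ccc^1$ boundary, the Dirichlet problem is solvable in its interior, so every real-valued function in $\ccc(\partial X)$ is the boundary trace of a function which is continuous on $X$ and harmonic in the interior. (Alternatively one may invoke directly the uniqueness of $\mu_f$ stated above.) Hence it is enough to show that the number $\is{2P[(I+K)\odw u](N)P f}f$ defining $\int_{\partial X}u\D\mu_f$ is real whenever $u$ is a real-valued function, continuous on $X$ and harmonic inside.

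The first step is to observe that the Neumann-Poincare operator $K$ on $\ccc(\partial X)$ acts by integration against the real kernel furnished by the normal derivative of the logarithmic potential along $\partial X$, so it maps real-valued functions to real-valued functions. Since $I+K$ is boundedly invertible on $\ccc(\partial X)$ by Theorem \ref{t2.28.11}, its inverse does the same: if $u$ is real and $(I+K)v=u$, then also $(I+K)\bar v=u$, whence $v=\bar v$ by uniqueness. Thus $v\okr(I+K)\odw u$ is a real-valued member of $\ccc(\partial X)$.

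The second step is purely spectral-theoretic. The operator $N$ is normal with $\spek N\subset\partial X$, so $g\mapsto g(N)$ is the continuous functional calculus of $N$, and it sends a real-valued $g\in\ccc(\partial X)$ to a selfadjoint operator; in particular $v(N)=v(N)^{*}$. Therefore $2Pv(N)P$ is selfadjoint on $\hhc$, and $\is{2Pv(N)Pf}f\in\mathbb R$ for every $f\in\hhc$. Combining the two steps gives $\int_{\partial X}u\D\mu_f\in\mathbb R$ for every real-valued harmonic $u$, hence $\mu_f$ is real. The only point requiring genuine care is the first step: one must know that $I+K$ is inverted within the real Banach space of real-valued continuous functions rather than merely within its complexification, and this is exactly what the uniqueness argument secures once Theorem \ref{t2.28.11} has supplied invertibility on $\ccc(\partial X)$. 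Everything else is routine.
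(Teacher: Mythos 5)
Your proof is correct and follows essentially the same route as the paper's (which is only sketched): the reality of $K$ on $\ccc(\partial X)$, hence of $(I+K)\odw$, combined with the selfadjointness of $[(I+K)\odw u](N)$ for real $u$ via the functional calculus of the normal operator $N$. The only deviation is that where the paper invokes the Neumann-series argument for $(I+K)\odw$ from \cite{put} to see that the inverse preserves real functions, you derive the same fact from the conjugation--uniqueness argument, which if anything is cleaner since it uses only the invertibility of $I+K$ rather than the convergence of the series.
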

   \begin{proof}
   Just a couple of words for the proof: as the Neumann-Poincar\'e
integral operator of $\ccc(\partial X)$ into itself, which really
$K$ is, maps real functions into real themselves, the whole
argument with Neumann series for $(I+K)^{-1}$ from the very bottom
of p. 348 of \cite{put} (after Theorem 1 therein) applied to a
real $u$ makes therefore the measures $\mu_f$ real.
   \end{proof}
   \begin{rem} \tlabel{t1.11.2}
   In \cite{bull} a generalization of $\rho$--dilatability was
proposed, very much in flavour of \eqref{1.29.11}. It was designed
to generate real elementary measures like $\ccc_\rho$ operators
do.
   \end{rem}

   \subsection*{The orthogonal decomposition}

Notice that either from Theorem \ref{t3.24.10} or from Theorem
\ref{t2.28.11}, depending on a kind of assumption on $X$ on wants
to impose, it follows that $X$ is a spectral set of $T$.
Consequently $T$ generates a representation, say $\varPhi$, of
$\rrc(X)$, according to \eqref{4.29.11}.

   Now we are in a position to state our decomposition result.
   \begin{thm} \tlabel{t3.29.11}
   Suppose $T\in\bbs(\hhc)$ and $X\subset\ccb$ are as in {\rm
Theorem \ref{t2.28.11}} and there is a system of elementary
spectral measures of $T$ such that \eqref{2.04.11} holds. Suppose
$\qqs$ is a system of commuting projections having the property R,
satisfying \eqref{2.29.10} and \eqref{3.04.11} and
$\{Q_\alpha\}_\alpha\subset\bbc(\qqs)$ is composed of projections
such that $Q_\alpha Q_\beta=0$ for $\alpha\neq\beta$ then there
exists a system $\{S_\alpha\}_\alpha\cup \{S_0\}$ of similarities
in $\hhc_\alpha\okr P_\alpha\hhc$ and $\hhc_0\okr P_0\hhc$ such
that
   \begin{equation} \label{2.01.11}
\varPhi(u)=\bigoplus_\alpha
S_\alpha\odw\varPsi_\alpha(u)S_\alpha\oplus
S_0\odw\varPsi_0(u)S_0,\quad u\in\rrc(X),
   \end{equation}
   where $S_\alpha\odw\varPsi_\alpha S_\alpha$ is the
$Q_\alpha$--part of $\varPhi$ and $S_0\odw\varPhi_0 S_0$ is the
${\bigwedge_\alpha (I-Q_\alpha)}$--part of $\varPhi$. The
representations $\varPsi_\alpha$ and $\varPsi_0$ are contractive.
   \end{thm}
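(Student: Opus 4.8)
The plan is to combine Theorem \ref{t1.01.11} with the extra input coming from Theorem \ref{t2.28.11} (via Lemma \ref{t1.29.11}) and the hypothesis \eqref{3.04.11}. First I would invoke the hypothesis that $X$ (convex, containing $\wwf(T)$) together with Theorem \ref{t2.28.11} makes $X$ a spectral set of $T$, so that $T$ does generate a bounded representation $\varPhi$ of $\rrc(X)$ as in \eqref{4.29.11}; this is exactly the paragraph preceding the statement. The crucial point is that Lemma \ref{t1.29.11} supplies a \emph{distinguished} system of elementary spectral measures $\{\mu_{f,g}\}_{f,g}$ for which \eqref{2.04.11} holds, i.e.\ the diagonal measures $\mu_{f,f}$ are real; by hypothesis the given system of commuting projections $\qqs$ with property R also satisfies \eqref{3.04.11}, so $Q\mu_{f,f}$ is real for every $Q$ in the Boolean algebra $\bbc(\qqs)$ in play.

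Next I would run the construction preceding Theorem \ref{t1.01.11}: for each $Q\in\bbc(\qqs)$ property R guarantees that $\varPhi_Q$, defined by $\is{\varPhi_Q(u)f}g=\int_X u\,\D Q\mu_{f,g}$, is a well-defined representation of $\rrc(X)$ on $\hhc_Q=P_Q\hhc$, with $P_Q=\varPhi_Q(1)$ an idempotent commuting with the range of $\varPhi$, and $P_{Q_1}P_{Q_2}=P_{Q_2}P_{Q_1}=0$ whenever $Q_1Q_2=Q_2Q_1=0$. Applying this to $Q_\alpha$ and to $Q_0\okr\bigwedge_\alpha(I-Q_\alpha)$ yields mutually orthogonal idempotents $P_\alpha\okr P_{Q_\alpha}$ and $P_0\okr P_{Q_0}$ whose ranges split $\hhc$, giving the algebraic direct-sum decomposition $\varPhi(u)=\bigoplus_\alpha\varPhi_\alpha(u)\oplus\varPhi_0(u)$ after a similarity $S$ — this is precisely Theorem \ref{t1.01.11}. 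The point now is that, because of \eqref{2.04.11}–\eqref{3.04.11}, each $P_Q$ is in fact \emph{selfadjoint} (as remarked right after Theorem \ref{t1.04.11}), hence each $P_\alpha$, $P_0$ is an orthogonal projection and the decomposition $\hhc=\bigoplus_\alpha\hhc_\alpha\oplus\hhc_0$ is orthogonal with $S$ the identity on the whole space.

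It remains to identify the pieces. The compressions $\varPhi_\alpha$ and $\varPhi_0$ are representations of $\rrc(X)$ on the $\hhc_\alpha$, $\hhc_0$ but need not a priori be contractive. Here I would argue that each $\varPhi_\alpha$ (respectively $\varPhi_0$), being a bounded unital representation of $\rrc(X)$ — which is a uniform algebra of rational functions on a compact planar set — sits in a single Gleason part / set-of-antisymmetry component, and on such a component the relevant Arveson/Paulsen-type estimate or the Delyon–Delyon–Putinar–Sandberg mechanism of Theorem \ref{t2.28.11} forces the similarity constant down to $1$; concretely, on each summand one re-applies Theorem \ref{t2.28.11} (or the $\ccc_\rho$/elementary-measure argument of Remark \ref{t1.11.2}) to the compressed operator $T_\alpha=\varPhi_\alpha(u_1)$, whose numerical range is still contained in $X$, obtaining a similarity $S_\alpha$ on $\hhc_\alpha$ such that $\varPsi_\alpha\okr S_\alpha\varPhi_\alpha(\cdot)S_\alpha\odw$ is \emph{contractive}, and likewise $S_0$ and $\varPsi_0$. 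Substituting these back gives exactly \eqref{2.01.11} with $S_\alpha\odw\varPsi_\alpha S_\alpha=\varPhi_\alpha$ the $Q_\alpha$-part of $\varPhi$ and $S_0\odw\varPhi_0 S_0$ the $\bigwedge_\alpha(I-Q_\alpha)$-part.

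I expect the main obstacle to be precisely that last step: showing each summand representation can be \emph{renormalized to a contraction}. Boundedness and unitality are automatic, and orthogonality of the $P$'s is handed to us by the reality conditions, but contractivity of $\varPsi_\alpha$ is where one genuinely needs Theorem \ref{t2.28.11} (the convex body still containing $\wwf(T_\alpha)$) rather than only the abstract property-R machinery of Theorem \ref{t1.01.11}; keeping the similarities local to each $\hhc_\alpha$, and checking that assembling $\{S_\alpha\}\cup\{S_0\}$ into $\bigoplus_\alpha S_\alpha\oplus S_0$ is still invertible on all of $\hhc$ (uniform bounds on $\|S_\alpha\|$ and $\|S_\alpha\odw\|$, which Theorem \ref{t2.28.11} delivers since the constants depend only on $X$), is the technical heart of the argument.
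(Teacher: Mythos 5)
Your overall architecture coincides with the paper's: first Theorem \ref{t1.04.11} (the reality conditions \eqref{2.04.11} and \eqref{3.04.11} make the projections $P_{Q_\alpha}$ and $P_{Q_0}$ selfadjoint, so the global similarity of Theorem \ref{t1.01.11} can be taken to be the identity and the splitting $\hhc=\bigoplus_\alpha\hhc_\alpha\oplus\hhc_0$ is orthogonal), and then a separate similarity on each summand, obtained from the fact that $\wwf(T_\alpha)\subset X$ --- which is exactly where \eqref{1.23.10} enters in the paper. Up to that point your proposal is faithful to the intended argument.

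The gap is in the last step, the one you yourself single out as the heart of the matter. Re-applying Theorem \ref{t2.28.11} (or the Delyon--Delyon result) to $T_\alpha=\varPhi_\alpha(u_1)$ only gives that $X$ is a $K$-spectral set of $T_\alpha$, i.e.\ that $\varPhi_\alpha$ is a \emph{bounded} unital representation of $\rrc(X)$. A bounded representation need not be similar to a contractive one (this is the Halmos similarity problem, answered in the negative by Pisier), so ``the similarity constant is forced down to $1$'' does not follow from $K$-spectrality alone; and the Gleason-part/antisymmetry considerations you interpolate supply no estimate here --- they play no role in this proof, being only the particular cases discussed afterwards. What the paper actually uses is Theorem 2 of \cite{cr1} (Crouzeix): an operator whose numerical range is contained in the convex body $X$ induces a \emph{completely bounded} homomorphism of $\rrc(X)$; combined with Paulsen's similarity criterion (Theorem 9.1 of \cite{paul}) this produces $S_\alpha$ with $S_\alpha\varPhi_\alpha(\cdot)S_\alpha\odw$ contractive. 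Your closing worry about uniform control of $\|S_\alpha\|\,\|S_\alpha\odw\|$ is legitimate and is settled by the same token: the complete bound in Crouzeix's theorem, hence the similarity constant delivered by Paulsen's theorem, depends only on $X$ and not on $\alpha$.
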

   \begin{proof}
   The only thing which requires some explanation is appearance of
the similarities $S_\alpha$ and $S_0$. The representations
$\varPsi_\alpha$ and $\varPsi_0$ which can be get from
\eqref{2.01.11} with $S=I$ according to Theorem \ref{t1.04.11}, or
rather the corresponding operators $T_\alpha=\varPsi_\alpha(u_1)$
and $T_0=\varPsi_0(u_1)$ have their numerical ranges contained in
$X$ as well; this is due to \eqref{1.23.10}. Theorem 2 of
\cite{cr1} tells us that each of those
$T_\alpha=\varPsi_\alpha(u_1)$'s as well as $T_0=\varPsi_0(u_1)$
are completely bounded. Now an application of Theorem 9.1, p. 120
of \cite{paul} generates the similarities in question.
   \end{proof}
   Notice that one of the new ingredients in Theorem
\ref{t3.29.11} comparing to what is in \cite{bull} is the
appearance of similarities within the orthogonal decomposition, or
in other words, a kind of diagonalization with respect to the
aforesaid orthogonal decomposition.
   \subsection*{Particular cases}  The two particular cases we can
   apply Theorem \ref{t3.29.11} to are the decompositions
determined by those for sets of antisymmetry (Theorem
\ref{1.24.10}) and Gleason parts (Theorem \ref{t1.26.11}). To
state the relevant results for an operator $T$ which generates the
representation $\varPhi$ is a matter of necessity. Let us mention
only that the assumptions of
   \eqref{2.29.10} and \eqref{3.04.11} to hold can be removed due
to the nature of projections $Q$ involved.

   %%%%%%%%%%%%%%%%%%%%%%%%%%%%%%%%%%%%%%%%%%%%%%%%%%%%%%%%%%%%%%
   \bibliographystyle{amsplain}
   
   %%%%%
   \end{document}